\newtheorem{theorem}{Theorem}[section]
\newtheorem{lemma}[theorem]{Lemma}
\newtheorem{corollary}[theorem]{Corollary}
\theoremstyle{definition}
\newtheorem{remark}[theorem]{Remark}
\numberwithin{equation}{section}
\begin{document}

\title{Semi--norms of the Bergman projection}

\author{Marijan Markovi\'{c}}

\address{
Faculty of Natural Sciences and Mathematics\endgraf
University of Montenegro\endgraf
Cetinjski put b.b.\endgraf
81000 Podgorica\endgraf
Montenegro}

\email{marijanmmarkovic@gmail.com}

\subjclass[2010]{Primary 45P05, Secondary 47B38, 30H30}

\keywords{the  Bergman projection, the  Bloch space}

\begin{abstract}
It is known that the Bergman projection operator maps the space of essentially bounded
functions in the unit ball in the $d$-dimensional complex vector space onto the  Bloch
space.  This paper deals with the various  semi--norms    of   the  Bergman projection.
We improve  some recent results.
\end{abstract}

\maketitle

\section{Introduction and the main theorem}
\subsection{Introduction} First we introduce the basic notation we will use. Throughout
the paper the letter d will denote a fixed positive integer.  Let
$\left<z,w\right>$  stand    for the inner product in the complex      $d$-dimensional
space $\mathbf{C}^d$  given by
\begin{equation*}
\left<z,w\right> = z_1\overline{w}_1+\dots+z_d\overline{w}_d,
\end{equation*}
where  $z=(z_1,\dots,z_d)$ and $w=(w_1,\dots,w_d)$ are coordinate representations   of
$z$ and  $w$  in the standard base $\{e_1,\dots,e_d\}$ of $\mathbf{C}^d$. Norm      in
$\mathbf{C}^d$   induced by the inner   product  is denoted                  by $|z| =
\sqrt{\left<z,z\right>}$.   Denote   by $B$ the unit ball $\{z\in \mathbf C^d:|z|<1\}$.
We write  $dv$ for the  Lebesgue measure in $\mathbf{C}^d$      normalized on the unit
ball.

Following the notation from the Rudin monograph~\cite{RUDIN.BOOK.BALL} as well as from
the Forelli and  Rudin work~\cite{FORELLI.INDIANA}, associate with each complex number
$s=  \sigma+it,\, \sigma>-1$                                       the integral kernel
\begin{equation*}
K_s(z,w)   =   \frac{\left(1-|w|^2\right)^s}{\left(1-\left<z,w\right>\right)^{d+1+s}},
\end{equation*}
and let
\begin{equation*}
T_s f(z) =  c_s   \int_B  K_s(z,w)\, f(w)\, dv(w),\quad z\in B.
\end{equation*}
Here   it is assumed that the complex power evaluates to its principal branch and that
the integral exists.        The coefficient $c_s$ is chosen in a such way that for the
weighted  measure
\begin{equation*}
dv_s(w)=c_s(1-|w|^2)^s dv(w)
\end{equation*}
we have $v_s(B)=1$ (so that $T_s  1=1$). One can show   that
\begin{equation*}
c_s     =   \frac{\Gamma(d+s+1)}{\Gamma(s+1) \Gamma(d+1)},
\end{equation*}
where $\Gamma$ stands for the Gamma function. The operator     $T_s$   is the Bergman
projection  operator. For properties            of  the Bergman  projections we refer
to~\cite{RUDIN.BOOK.BALL, ZHU.BOOK.SPACES}.

Let $L^p(B)\, (1\le  p < \infty)$       stand for the Lebesgue space of all measurable
functions     in the unit ball of $\mathbf{C}^d$ whose modulus to the exponent $p$ is
integrable. For $p = \infty$       let it denote the space of all essentially bounded
measurable functions. Denote by      $\|\cdot\|_p$  the usual norm on $L^p(B)\, (1\le
p\le\infty)$.

Forelli and Rudin~\cite{FORELLI.INDIANA} proved that the operator $T_s$ maps $L^p(B)$
onto the Bergman space in the unit ball $L^p_a(B)$        continuously if and only if
$\sigma > 1/p -1 \, (1\le p<\infty)$.                       Moreover, they calculated
\begin{equation*}
\|T_\sigma\|_{ L^1(B)\rightarrow L^1_a(B)}  =
\frac{\Gamma(d+\sigma+1)}{\Gamma^2((d+\sigma+1)/2)}
\frac{\Gamma(\sigma)}{\Gamma(\sigma+1)},\quad \sigma>0,
\end{equation*}
and
\begin{equation*}
\|T_\sigma\|_{ L^2(B)\rightarrow L^2_a(B)} =
 \frac{\sqrt{\Gamma(2\sigma + 1)} }{\Gamma(\sigma+1)}, \quad \sigma>-1/2.
\end{equation*}
Very  recently  Liu~\cite{LIU.JFA}   established the     following two     estimates
\begin{equation*}
\frac {\Gamma((d+1)/p)\Gamma ((d+1)/q)}{\Gamma^2((d+1)/2)}\le
\|T_0\|_{L^p(B)\rightarrow L^p_a(B)}\le
\frac{\Gamma(d+1)} {\Gamma^2((d+1)/2)} \frac{\pi}{\sin(\pi/p)},
\end{equation*}
where $q=p/(p-1)\, (1<p<\infty)$. He also made the conjecture    that equality holds
in the left hand inequality. It would be of some interest     to extend  this result
to   the weighted case, i.e., for  any $\sigma>1/p-1$.

On the other hand,  it is known   that the operator $T_\sigma$ projects $L^\infty(B)$
continuously onto the  Bloch space   $\mathcal{B}$ in the unit ball in $\mathbf{C}^d$
(for every $\sigma>-1$).  This can be seen from Theorem 3.4 in~\cite{ZHU.BOOK.SPACES}
and the Choe paper~\cite{CHOE.PAMS}.     Recall   that the  Bloch space $\mathcal{B}$
contains  all  functions $f(z)$ analytic                            in $B$ for  which
\begin{equation*}
\sup_{z\in  B}\left(1-|z|^2\right)|\nabla f(z)|
\end{equation*}
is  finite. Here
\begin{equation*}
\nabla f(z)
= \left(\frac {\partial f(z)}{\partial z_1},\dots,\frac{\partial f(z)}{\partial z_n}\right)
\end{equation*}
is the complex  gradient of $f(z)$ at the point $z$. For the facts  concerning    the
Bloch   space in several dimensions we refer to~\cite{TIMONEY.BLMS, TIMONEY.CRELLE, ZHU.BOOK.SPACES}.

\subsection{The main result}           Let $n$ be  any   positive  integer and denote
$\tilde {d} = \binom {n+d-1}{d-1}$. In                               what follows let
$|\cdot|_{ {\mathbf{C}}^{\tilde{d}}}$ be a norm on $\mathbf{C}^{\tilde{d}}$     which
satisfies
\begin{equation*}
|\overline{Z}|_{ {\mathbf{C}}^{\tilde{d}}}   =  |Z|_{ {\mathbf{C}}^{\tilde{d}}},\quad
Z\in \mathbf{C}^{\tilde{d}}.
\end{equation*}
Let $H(B)$ denotes the space of all analytic function in the unit ball,       and let
$\mathbf{Z}_+$ be the set of all non-- negative integers.               Introduce the
operator              $\mathcal{D}_z:  H(B)\rightarrow          H(B)^{\tilde{d}}$  by
\begin{equation*}
\mathcal{D}_z f (z)                         =  (\dots, \partial^\alpha_z f(z),\dots )
\end{equation*}
(the right side contains all                              $\partial^\alpha _z f(z)  =
\partial^{\alpha_1}_{z_1}\cdots \partial ^{\alpha_n}_{z_n} f(z)$ such that        for
$\alpha  =(\alpha_1,\dots,\alpha_d)\in \mathbf{Z}_+^d$ there  holds     $|\alpha|=n$;
therefore it  contains $\tilde{d}$  components). In the Bloch space $\mathcal {B}$ we
will consider   the  following   semi--norm
\begin{equation*}
\|f\|_{\mathcal{B}} =
\sup_{z\in B}   \left(1-|z|^2\right)^n |\mathcal{D}_z f (z)|_{\mathbf{C}^{\tilde{d}}}.
\end{equation*}
Recall that $f\in\mathcal{B}$         if and only     if $\sup_{z\in B}\, (1-|z|^2)^n
\,|\partial^\alpha_z f(z)|$ is finite for all $\alpha\in\mathbf{Z}_+^d,\, |\alpha|=n$,
i.e., if and only if
\begin{equation*}
\sup_{z\in B} \max_{|\alpha|=n}\left(1-|z|^2\right)^n |\partial ^\alpha_z f(z)|
\end{equation*}
is  finite.  It follows from this characterization  of  the Bloch   space  that $f\in
\mathcal{B}$  if and only if $\|f\|_{\mathcal{B}}<\infty$.

Our aim in this paper is to prove the following result.

\begin{theorem}
The  Bergman projection operator  $T_\sigma$ satisfies
\begin{equation*}
\| T_\sigma\|_{L^\infty(B)\rightarrow\mathcal{B}}  \
= \sup_{\|G\|_\infty \le 1}\|T_\sigma  G\|_{\mathcal{B}}
=\mathcal {C} \, \frac{\Gamma(\lambda+n)\Gamma(n)}{\Gamma^2((\lambda +  n)/2)}
\end{equation*}
for every       $\sigma>-1$, where            $\lambda=d+1+\sigma$ and $\mathcal {C}=
\max _{|\zeta|=1}  | Z(\zeta) |_{\mathbf{C}^{\tilde{d}}}$.
\end{theorem}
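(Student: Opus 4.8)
The first equality is just the definition of the operator norm (the excerpt already records that $T_\sigma$ carries $L^\infty(B)$ into $\mathcal B$), so the content is the last equality, and the plan is to prove it by two matching estimates. Write $\lambda=d+1+\sigma$, denote by $(\lambda)_n=\lambda(\lambda+1)\cdots(\lambda+n-1)=\Gamma(\lambda+n)/\Gamma(\lambda)$ the Pochhammer symbol, and set $Z(w)=(\dots,w^\alpha,\dots)_{|\alpha|=n}\in\mathbf C^{\tilde d}$, so that $\mathcal C=\max_{|\zeta|=1}|Z(\zeta)|_{\mathbf C^{\tilde d}}$; since $Z$ is homogeneous of degree $n$ and the sphere is compact, $|Z(w)|_{\mathbf C^{\tilde d}}\le|w|^n\mathcal C\le\mathcal C$. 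Differentiating under the integral sign (justified because, for $z$ in a compact subset of $B$, the kernel together with all its $z$-derivatives is bounded on $B$ and $(1-|w|^2)^\sigma\in L^1(B)$) and using $\partial^\alpha_z(1-\langle z,w\rangle)^{-\lambda}=(\lambda)_n\,\overline w^\alpha(1-\langle z,w\rangle)^{-\lambda-n}$ for $|\alpha|=n$ gives
\[
\mathcal D_z T_\sigma G(z)=c_\sigma(\lambda)_n\int_B\frac{\overline{Z(w)}\,(1-|w|^2)^\sigma}{(1-\langle z,w\rangle)^{\lambda+n}}\,G(w)\,dv(w).
\]

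For the upper bound I would use the triangle inequality for $\mathbf C^{\tilde d}$-valued integrals, together with $|\overline Z|_{\mathbf C^{\tilde d}}=|Z|_{\mathbf C^{\tilde d}}$, the bound $|Z(w)|_{\mathbf C^{\tilde d}}\le\mathcal C$ and $\|G\|_\infty\le1$, to get
\[
(1-|z|^2)^n|\mathcal D_z T_\sigma G(z)|_{\mathbf C^{\tilde d}}\le(\lambda)_n\mathcal C\,(1-|z|^2)^n\,c_\sigma\int_B\frac{(1-|w|^2)^\sigma}{|1-\langle z,w\rangle|^{\lambda+n}}\,dv(w).
\]
By unitary invariance this integral depends only on $|z|$; expanding the non-analytic kernel $|1-\langle z,w\rangle|^{-(\lambda+n)}$ as a double binomial series and integrating term by term against the classical moments $\int_B w^\alpha\overline w^\beta(1-|w|^2)^\sigma\,dv=\delta_{\alpha\beta}\,\alpha!/(c_\sigma(\lambda)_{|\alpha|})$ collapses it to ${}_2F_1(\tfrac{\lambda+n}{2},\tfrac{\lambda+n}{2};\lambda;|z|^2)$. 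Euler's transformation turns $(1-|z|^2)^n$ times this into ${}_2F_1(\tfrac{\lambda-n}{2},\tfrac{\lambda-n}{2};\lambda;|z|^2)$, a power series whose $j$-th coefficient $((\tfrac{\lambda-n}{2})_j)^2/(j!\,(\lambda)_j)$ is nonnegative; hence it is nondecreasing on $[0,1)$ and, by Gauss's summation theorem (applicable since $\lambda-2\cdot\tfrac{\lambda-n}{2}=n>0$), bounded there by $\Gamma(\lambda)\Gamma(n)/\Gamma^2(\tfrac{\lambda+n}{2})$. Multiplying back by $(\lambda)_n=\Gamma(\lambda+n)/\Gamma(\lambda)$ gives $\|T_\sigma\|_{L^\infty(B)\to\mathcal B}\le\mathcal C\,\Gamma(\lambda+n)\Gamma(n)/\Gamma^2(\tfrac{\lambda+n}{2})$.

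For the lower bound I would fix $\zeta^*$ with $|\zeta^*|=1$ and $|Z(\zeta^*)|_{\mathbf C^{\tilde d}}=\mathcal C$, and for $0<r<1$ test against the unimodular function $G_r(w)=\big((1-r\langle\zeta^*,w\rangle)/(1-r\langle w,\zeta^*\rangle)\big)^{(\lambda+n)/2}$, so $\|G_r\|_\infty=1$. Taking $z=r\zeta^*$, the phase of $G_r$ cancels that of $(1-\langle z,w\rangle)^{-(\lambda+n)}$ and leaves $\mathcal D_z T_\sigma G_r(z)=c_\sigma(\lambda)_n\int_B\overline{Z(w)}\,(1-|w|^2)^\sigma\,|1-r\langle\zeta^*,w\rangle|^{-(\lambda+n)}\,dv(w)$. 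Expanding as before and using $|\zeta^*|^2=1$, the multi-index sums collapse so that every component equals $\overline{(\zeta^*)^\alpha}$ times one common scalar $s_r>0$ (a power series in $r^2$ with positive coefficients); thus $\mathcal D_z T_\sigma G_r(z)=s_r\,\overline{Z(\zeta^*)}$ and, since $|\overline Z|_{\mathbf C^{\tilde d}}=|Z|_{\mathbf C^{\tilde d}}$ and $s_r>0$, $(1-r^2)^n|\mathcal D_z T_\sigma G_r(z)|_{\mathbf C^{\tilde d}}=\mathcal C\,(1-r^2)^n s_r$. Another pass through Euler's transformation and Gauss's theorem identifies $\lim_{r\to1^-}(1-r^2)^n s_r=\Gamma(\lambda+n)\Gamma(n)/\Gamma^2(\tfrac{\lambda+n}{2})$; as $\|T_\sigma G_r\|_{\mathcal B}\ge(1-r^2)^n|\mathcal D_z T_\sigma G_r(r\zeta^*)|_{\mathbf C^{\tilde d}}$, letting $r\to1$ yields the reverse inequality and hence equality.

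The hard part will be the bookkeeping in the two hypergeometric evaluations, especially verifying in the lower bound that the multi-index sums really do collapse — the identity $\sum_{|\mu|=j}(\zeta^*)^\mu\overline{(\zeta^*)^{\alpha+\mu}}/\mu!=\overline{(\zeta^*)^\alpha}\,|\zeta^*|^{2j}/j!$ is what makes $\mathcal D_z T_\sigma G_r(r\zeta^*)$ exactly parallel to $\overline{Z(\zeta^*)}$, and this parallelism (using $|\overline Z|_{\mathbf C^{\tilde d}}=|Z|_{\mathbf C^{\tilde d}}$) is precisely what forces the constant $\mathcal C$ rather than something smaller. Everything else — differentiation under the integral, the moment formula on $B$, the reduction to $z=re_1$, and the Euler/Gauss identities for ${}_2F_1$ — is standard and serves to convert the boundary limit into the stated closed form.
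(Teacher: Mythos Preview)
Your argument is correct, but it follows a genuinely different route from the paper's.

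For the upper bound the paper never expands anything in power series; instead it pairs $\mathcal D_z(T_\sigma G)(z)$ with a dual vector $Z^*$, applies the change of variables $w=\varphi_z(\omega)$ (Lemma~2.1) to convert the weight $(1-|z|^2)^n/|1-\langle z,w\rangle|^{\lambda+n}$ into $1/|1-\langle z,\omega\rangle|^{\lambda-n}$, bounds the transplanted inner product by $\mathcal C$, and then quotes the closed form of the Forelli--Rudin integral $J_{-n,\sigma}$ (Lemma~2.2). You bypass the M\"obius change of variables entirely: the triangle inequality for $\mathbf C^{\tilde d}$-valued integrals, the ${}_2F_1$ expansion of $|1-\langle z,w\rangle|^{-(\lambda+n)}$ against the standard monomial moments, Euler's transformation, and Gauss's summation do the same job. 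Both approaches are standard; yours is more computational but entirely self-contained, while the paper's is more geometric and reuses the same integral lemma twice.

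For the lower bound the difference is more substantial. The paper chooses a dual vector $Z_0^*$ realising $|\langle Z(\zeta_0),Z_0^*\rangle|=\mathcal C$, builds extremal functions $G_m$ containing the non-smooth factor $\langle Z(w),Z_0^*\rangle/|\langle Z(w),Z_0^*\rangle|$, so that pairing with $\overline{Z_0^*}$ produces the positive integrand $|\langle Z(w),Z_0^*\rangle|/|1-\langle z_m,w\rangle|^{\lambda+n}$; it then applies the $\varphi_{z_m}$ change of variables again and passes to the limit $z_m\to\zeta_0$ via Vitali's theorem. Your test function $G_r$ has no such factor: it only cancels the phase of $(1-\langle z,w\rangle)^{\lambda+n}$, and you compute the \emph{entire} vector $\mathcal D_z T_\sigma G_r(r\zeta^*)$ by series, using the multinomial identity $\sum_{|\mu|=j}(\zeta^*)^\mu\overline{(\zeta^*)^{\alpha+\mu}}/\mu!=\overline{(\zeta^*)^\alpha}/j!$ to see that every component is $\overline{(\zeta^*)^\alpha}$ times one common positive scalar $s_r$. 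This delivers the direction $\overline{Z(\zeta^*)}$ exactly, so the norm hypothesis $|\overline Z|=|Z|$ gives the factor $\mathcal C$ immediately, and the limit $r\to1$ is just Abel/Gauss rather than Vitali. Your route avoids both the singular extremal factor and the uniform-integrability check; the paper's route avoids the hypergeometric bookkeeping. Either is a complete proof.
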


For       $\zeta\in\mathbf{C}^d$ in the above theorem we write  $Z(\zeta) =
(\dots,\zeta^\alpha,\dots)\in\mathbf{C}^{\tilde{d}}$ (it contains all $\zeta^\alpha =
\zeta_1^{\alpha_1}\cdots \zeta_d^{\alpha_d},\,\alpha\in\mathbf{Z}_+^d,\, |\alpha|=n$).

\begin{remark}\label{RE.C}
Note                    that the constant   $\mathcal {C}$ depends only on  the  norm
$|\cdot|_{\mathbf{C}^{\tilde{d}}}$.                          It is easy to check that
\begin{equation*}
|Z(w)|_{\mathbf{C}^{\tilde{d}}}\le\mathcal{C},\quad w\in B.
\end{equation*}
\end{remark}

\section{Preliminaries}
We will need some auxiliary results in order to prove our theorem.

\subsection{An integral transform}
It       is well known that bi--holomorphic mappings of $B$ onto itself have the form
\begin{equation*}
\varphi_z(\omega)=
\frac {z- {\left<\omega,z\right>} z/{|z|^2}-(1-|z|^2)^{1/2} (\omega- {\left<\omega,z\right>}z/{|z|^2})}
{1-\left<\omega,z\right>}
\end{equation*}
up to  unitary transformations for some $z\in B\backslash \{0\}$.      For  $z=0$ set
$\varphi_0 = -\mathrm {Id}|_B$.
The known  identities
\begin{equation}\label{RE.1}
1-|\varphi_z(\omega)|^2=\frac{(1-|z|^2) (1-|\omega|^2)}{\left|1-\left<\omega,z\right>\right|^2}
\end{equation}
and
\begin{equation}\label{RE.2}
\left(1-\left<\omega,z\right>\right) \left(1-\left<\varphi_z(\omega),z\right>\right)=1-|z|^2
\end{equation}
for  $z,\, \omega\in B$                        will be useful in the proof of the next

\begin{lemma}\label{LE.INT.TRANSFORM}
For every $z\in B$ we have
\begin{equation*}
\left(1-|z|^2\right)^n  \int_{B} \frac {\Phi (w)}{\left|1-\left<z,w\right>\right|^{\lambda+n}}\, dv_\sigma(w)
=\int_{B}  \frac {\Phi(\varphi_z(\omega))}{\left|1-\left<z,\omega\right>\right|^{\lambda-n}}\, dv_\sigma(\omega),
\end{equation*}
where $\Phi(w)$ is a function  in the unit ball $B$ such that the integral on the left
side exists.
\end{lemma}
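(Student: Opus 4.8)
The plan is to perform the change of variables $w = \varphi_z(\omega)$ in the integral on the left-hand side. Since $\varphi_z$ is an involutive biholomorphism of $B$, the substitution is legitimate, and what must be tracked are (i) the real Jacobian of $\varphi_z$, (ii) how the factor $(1-|z|^2)^n$ interacts with the weight $(1-|w|^2)^\sigma$ hidden in $dv_\sigma$, and (iii) how the kernel $|1-\langle z,w\rangle|^{-(\lambda+n)}$ transforms. The key computational tool is the classical Jacobian identity for $\varphi_z$ (see Rudin~\cite{RUDIN.BOOK.BALL}), namely that the real Jacobian of $\omega\mapsto\varphi_z(\omega)$ equals $\bigl((1-|z|^2)/|1-\langle\omega,z\rangle|^2\bigr)^{d+1}$; combining this with~\eqref{RE.1} and~\eqref{RE.2} should collapse all the pieces into the stated right-hand side.

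Concretely, I would proceed as follows. First, write $dv_\sigma(w) = c_\sigma(1-|w|^2)^\sigma\,dv(w)$ and substitute $w=\varphi_z(\omega)$. By~\eqref{RE.1}, $1-|w|^2 = 1-|\varphi_z(\omega)|^2 = (1-|z|^2)(1-|\omega|^2)/|1-\langle\omega,z\rangle|^2$, so $(1-|w|^2)^\sigma$ produces a factor $(1-|z|^2)^\sigma (1-|\omega|^2)^\sigma |1-\langle\omega,z\rangle|^{-2\sigma}$. Next, $dv(w) = \bigl((1-|z|^2)/|1-\langle\omega,z\rangle|^2\bigr)^{d+1}\,dv(\omega)$ by the Jacobian identity, contributing $(1-|z|^2)^{d+1}|1-\langle\omega,z\rangle|^{-2(d+1)}$. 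Finally, the kernel: using~\eqref{RE.2} in the form $1-\langle z,\varphi_z(\omega)\rangle = \overline{1-\langle\varphi_z(\omega),z\rangle} = \overline{(1-|z|^2)/(1-\langle\omega,z\rangle)}$, we get $|1-\langle z,w\rangle| = (1-|z|^2)/|1-\langle\omega,z\rangle|$, hence $|1-\langle z,w\rangle|^{-(\lambda+n)} = (1-|z|^2)^{-(\lambda+n)}|1-\langle\omega,z\rangle|^{\lambda+n}$. Note here that $\langle z,w\rangle$ and $\langle w,z\rangle$ are complex conjugates, so their moduli agree and no care about which argument carries the conjugate is needed for the absolute value.

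Now I multiply everything together. The powers of $(1-|z|^2)$ on the right-hand side, including the prefactor $(1-|z|^2)^n$ already present, total $n + \sigma + (d+1) - (\lambda+n) = \sigma + (d+1) - \lambda = 0$ since $\lambda = d+1+\sigma$; so all powers of $(1-|z|^2)$ cancel. The powers of $|1-\langle\omega,z\rangle|$ total $-2\sigma - 2(d+1) + (\lambda+n) = -2\lambda + \lambda + n = n-\lambda = -(\lambda-n)$, which is exactly the exponent appearing on the right-hand side. Reassembling, the left-hand side becomes $c_\sigma\int_B \Phi(\varphi_z(\omega))(1-|\omega|^2)^\sigma|1-\langle\omega,z\rangle|^{-(\lambda-n)}\,dv(\omega) = \int_B \Phi(\varphi_z(\omega))|1-\langle z,\omega\rangle|^{-(\lambda-n)}\,dv_\sigma(\omega)$, as claimed (again using $|1-\langle\omega,z\rangle|=|1-\langle z,\omega\rangle|$).

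The main obstacle is simply bookkeeping the exponents correctly and invoking the precise form of the Jacobian of $\varphi_z$; there is no deep difficulty, since existence of the transformed integral is guaranteed by the hypothesis together with the fact that $\varphi_z$ is a measure-space isomorphism of $(B, dv)$ up to the explicit smooth density above. One should also record the degenerate case $z=0$, where $\varphi_0=-\mathrm{Id}$ and both sides reduce to $\int_B\Phi(-\omega)\,dv_\sigma(\omega)=\int_B\Phi(w)\,dv_\sigma(w)$ trivially, so the identity holds there as well.
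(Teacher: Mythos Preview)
Your proof is correct and follows essentially the same approach as the paper: the change of variables $w=\varphi_z(\omega)$, the real Jacobian formula for $\varphi_z$, and the identities~\eqref{RE.1} and~\eqref{RE.2} are exactly the ingredients used there. The only cosmetic difference is that the paper first packages the Jacobian and the weight transformation into a single pull--back formula $dv_\sigma(\varphi_z(\omega)) = \bigl((1-|z|^2)/|1-\langle\omega,z\rangle|^2\bigr)^{\lambda}\,dv_\sigma(\omega)$ before substituting, whereas you track the exponents of $(1-|z|^2)$ and $|1-\langle\omega,z\rangle|$ term by term.
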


\begin{proof}
The real Jacobian of $\varphi_z(\omega)$ is given by the expression
\begin{equation*}
(J_\mathbf R\varphi_z)(\omega)=
     \left\{\frac{1-|z|^2}{\left|1-\left<\omega,z\right>\right|^2}\right\}^{d+1}\quad
(\omega\in B).
\end{equation*}
Using~\eqref{RE.1}            we  obtain the next relation for the pull--back measure
\[\begin{split}
dv_\sigma (\varphi_z(\omega))& =
c_\sigma \, (1-\left|\varphi_z(w)\right|^2 )^\sigma   (J_\mathbf R\varphi_z)(\omega)\, dv(\omega)
\\&=c_\sigma \left\{\frac{\left(1-|\omega|^2 \right)\left(1-|z|^2\right)}{\left|1-\left<\omega,z\right>\right|^2}\right\}^\sigma
\left\{\frac{1-|z|^2}{\left|1-\left<\omega,z\right>\right|^2}\right\}^{d+1}dv(\omega)
\\&= \left\{\frac{1-|z|^2}{\left|1-\left<\omega,z\right>\right|^2}\right\}^{\lambda} dv_\sigma(\omega).
\end{split}\]

Denote the  integral on  the left side of our lemma  by $J$. Introducing  the  change
of  variables $w=\varphi_z(\omega)$ and using the preceding   result for          the
pull--back measure, we obtain
\[\begin{split}
J&=\int_B \frac {\left(1-|z|^2\right)^n \Phi (\varphi_z(\omega))} {\left|1-\left<z,\varphi_z(\omega)\right>\right|^{\lambda+n}}
\frac{\left(1-|z|^2\right)^{\lambda}}{\left|1-\left<z,\omega\right>\right|^{2\lambda}}\, dv_\sigma(\omega)
\\&=\int_{B}
\frac {\left(1-|z|^2\right)^{\lambda+n}  \Phi(\varphi_z(\omega))}
{\left|1-\left<z,\varphi_z(\omega)\right>\right|^{\lambda+n} \left|1-\left<z,\omega\right>\right|^{2\lambda}}\, dv_\sigma(\omega)
\\&=\int_{B}\frac{\left(\left|1-\left<z,\omega\right>\right|\left|1-\left<z,\varphi_z(\omega)\right>\right|\right)^{\lambda+n}
\Phi (\varphi_z(\omega))} {\left|1-\left<z,\varphi_z(\omega)\right>\right|^{\lambda+n}
\left|1-\left<z,\omega\right>\right|^{2\lambda}}\, dv_\sigma(\omega)
\\&=\int_{B} \frac {\Phi(\varphi_z(\omega))}{\left|1-\left<z,\omega\right>\right|^{\lambda-n}}\, dv_\sigma(\omega).
\end{split}\]
In the                            third equality we have used the identity~\eqref{RE.2}
\end{proof}

\subsection{Maximum of a parametric  integral}
In  connection with  the next   lemma see~\cite[Proposition~1.4.10]{RUDIN.BOOK.BALL} as
well as~\cite{KALAJ.SCAND}.

\begin{lemma}\label{LE.RUDIN}
For $z\in B$, $c$ real, $t>-1$ define
\begin{equation*}
J_{c,t}(z) = \int_{B}  \frac{\left(1-|w|^2\right)^t}{\left|1-\left<z,w\right>\right|^{d+1+t+c}}\, dv(w).
\end{equation*}
Then $J_{c,t}(z)$ is bounded in $B$ for $c<0$. Moreover, $J_{c,t}(z)$   depends only on
$|z|$, it is  increasing in this value, and
\begin{equation*}
\sup_{z\in B}   J_{c,t}(z) = J_{c,t} (e_1) = \frac{\Gamma(d+1) \Gamma(t+1)  \Gamma(-c)}
{\Gamma^2((d +  1  +  t -c)/ 2)}.
\end{equation*}
\end{lemma}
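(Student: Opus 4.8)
The plan is to turn $J_{c,t}$ into a single power series in $|z|^2$ with nonnegative coefficients; boundedness, monotonicity, and the closed form will then all drop out of this one representation. First I would exploit rotational symmetry. Since $dv$ is invariant under the unitary group $U(d)$ and $\left<Uz,Uw\right>=\left<z,w\right>$, $|Uw|=|w|$, the substitution $w\mapsto Uw$ gives $J_{c,t}(Uz)=J_{c,t}(z)$. As $U(d)$ acts transitively on each sphere $\{|z|=r\}$, this shows $J_{c,t}(z)$ depends only on $|z|$, so it suffices to evaluate $J_{c,t}(re_1)$ for $0\le r<1$, where $\left<re_1,w\right>=r\overline{w}_1$.

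Next I would expand the kernel. Setting $\beta=(d+1+t+c)/2$, factor $|1-r\overline{w}_1|^{-(d+1+t+c)}=(1-r\overline{w}_1)^{-\beta}(1-rw_1)^{-\beta}$ and apply the binomial series $(1-x)^{-\beta}=\sum_{j\ge0}\frac{(\beta)_j}{j!}x^j$, where $(\beta)_j=\Gamma(\beta+j)/\Gamma(\beta)$ is the Pochhammer symbol, to each factor. Integrating term by term against $(1-|w|^2)^t\,dv(w)$, the average over the phase of $w_1$ kills every off-diagonal product, leaving
\begin{equation*}
J_{c,t}(re_1)=\sum_{j=0}^\infty\frac{(\beta)_j^2}{(j!)^2}\left(\int_B|w_1|^{2j}(1-|w|^2)^t\,dv(w)\right)r^{2j}.
\end{equation*}
All coefficients here are nonnegative, so this is a genuine power series in $r^2=|z|^2$ with nonnegative terms. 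Monotonicity in $|z|$ is then immediate, and by monotone convergence $\sup_{z\in B}J_{c,t}(z)$ equals the limit as $r\to1^-$, which is the sum of the coefficients and which the statement denotes $J_{c,t}(e_1)$.

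It remains to sum the series and read off boundedness. Substituting the standard ball integral $\int_B|w_1|^{2j}(1-|w|^2)^t\,dv(w)=\Gamma(d+1)\Gamma(t+1)\Gamma(j+1)/\Gamma(d+1+j+t)$ and using $(\beta)_j=\Gamma(\beta+j)/\Gamma(\beta)$ together with $d+1+t=2\beta-c$, the ratio of consecutive coefficients simplifies to $(\beta+j)^2/[(j+1)(2\beta-c+j)]$. This identifies the sum as a Gauss hypergeometric series,
\begin{equation*}
J_{c,t}(e_1)=\frac{\Gamma(d+1)\Gamma(t+1)}{\Gamma(2\beta-c)}\,{}_2F_1(\beta,\beta;2\beta-c;1).
\end{equation*}
Gauss's summation theorem ${}_2F_1(a,b;\gamma;1)=\Gamma(\gamma)\Gamma(\gamma-a-b)/[\Gamma(\gamma-a)\Gamma(\gamma-b)]$ applies exactly when $\mathrm{Re}(\gamma-a-b)>0$; here $\gamma-a-b=-c$, so the hypothesis $c<0$ is precisely the condition for convergence at $1$, which simultaneously establishes that $J_{c,t}$ is bounded. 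Evaluating and using $\beta-c=(d+1+t-c)/2$ collapses everything to $\Gamma(d+1)\Gamma(t+1)\Gamma(-c)/\Gamma^2((d+1+t-c)/2)$, the asserted value.

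The main obstacle I anticipate is the bookkeeping: recognizing the diagonal series as a ${}_2F_1$ at the point $1$ and matching all the Gamma factors after the substitutions, together with the clean observation that $c<0$ is at once the boundedness hypothesis and the Gauss convergence condition $\gamma-a-b>0$. The term-by-term integration is harmless because all summands are nonnegative, so Tonelli's theorem and monotone convergence justify it without any further estimates.
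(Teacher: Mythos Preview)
The paper does not actually prove this lemma; it only states it and refers to \cite[Proposition~1.4.10]{RUDIN.BOOK.BALL} and \cite{KALAJ.SCAND} for the argument and the exact constant. Your approach---reduce to $z=re_1$ by unitary invariance, expand $|1-\langle z,w\rangle|^{-2\beta}$ as a double binomial series, diagonalize via orthogonality, and identify the result as ${}_2F_1(\beta,\beta;2\beta-c;r^2)$ evaluated at $r=1$ by Gauss's theorem---is precisely the standard Forelli--Rudin computation underlying those references, and the identifications $\gamma-a-b=-c$ and $\beta-c=(d+1+t-c)/2$ are correct.

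One small point of care: your closing sentence says the term-by-term integration is justified because ``all summands are nonnegative.'' That is true of the \emph{diagonal} series you end up with, but the intermediate double series $\sum_{j,k}\frac{(\beta)_j(\beta)_k}{j!k!}(r\overline{w}_1)^j(rw_1)^k$ has complex terms, and if $\beta\le 0$ the coefficients $(\beta)_j$ need not be positive. The clean justification is that for fixed $r<1$ the double series is dominated termwise by $\bigl(\sum_j\frac{|(\beta)_j|}{j!}r^j\bigr)^2(1-|w|^2)^t$, which is bounded in $w$ and hence $dv$-integrable; Fubini then gives the diagonal reduction, after which your monotone-convergence argument for $r\to 1^-$ goes through exactly as written.
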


\subsection{The Vitali theorem}
The       following lemma is known as  Vitali theorem. For example, see    Theorem 26.C
in~\cite{HALMOS.BOOK.MEASURE}.

\begin{lemma}\label{LE.VITALI}
Let        $X$ be a measure space with finite measure $\mu$ and let $h_m : X\rightarrow
\mathbf{C}$ be a  sequence of functions that is uniformly integrable, i.e.,  such  that
for  every $\varepsilon>0$ there  exists $\delta>0$, independent of $m$,     satisfying
\begin{equation*}
\mu(E)<\delta\implies \int_E |h_m|\, d\mu <\varepsilon.                   \eqno(\dag)
\end{equation*}
Now:                             if $\lim_{m\rightarrow \infty} h_m(x)=h(x)$ a.e., then
\begin{equation*}
\lim_{m\rightarrow\infty} \int_X  h_m\, d\mu = \int_X    h\, d\mu.         \eqno(\ddag)
\end{equation*}
In particular, if
\begin{equation*}
\sup_m \int_X  |h_m|^s  d\mu<\infty\ \ \text{for some $s>1$},
\end{equation*}
then  $(\dag)$ and $(\ddag)$ hold.
\end{lemma}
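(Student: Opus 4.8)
The plan is to establish the two assertions separately: first that uniform integrability $(\dag)$ together with a.e.\ convergence yields $(\ddag)$, and then that the $L^s$-bound with $s>1$ forces $(\dag)$, after which the conclusion follows from the first part. Before the main argument I would record two consequences of $(\dag)$ that use the finiteness of $\mu$. Partitioning $X$ into finitely many pieces each of measure less than $\delta$ (possible since $\mu(X)<\infty$) and summing the estimate in $(\dag)$ shows that $\sup_m\int_X|h_m|\,d\mu<\infty$; Fatou's lemma then gives $\int_X|h|\,d\mu\le\liminf_m\int_X|h_m|\,d\mu<\infty$, so $h$ is integrable. Applying Fatou on a fixed set $E$ with $\mu(E)<\delta$ yields $\int_E|h|\,d\mu\le\liminf_m\int_E|h_m|\,d\mu\le\varepsilon$, so the limit $h$ obeys the same uniform-integrability estimate as the $h_m$.

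The decisive step is Egorov's theorem, available precisely because $\mu$ is finite. Given the $\delta$ produced by $(\dag)$ for a prescribed $\varepsilon$, there is a set $F\subset X$ with $\mu(X\setminus F)<\delta$ on which $h_m\to h$ uniformly. I would then split
\begin{equation*}
\int_X|h_m-h|\,d\mu = \int_F|h_m-h|\,d\mu + \int_{X\setminus F}|h_m-h|\,d\mu .
\end{equation*}
The first integral is at most $\mu(X)\sup_F|h_m-h|$, which tends to $0$ by the uniform convergence on $F$. For the second, since $\mu(X\setminus F)<\delta$, the triangle inequality combined with the two uniform-integrability bounds (one for $h_m$, one for $h$) gives $\int_{X\setminus F}|h_m-h|\,d\mu<2\varepsilon$. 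Hence $\limsup_m\int_X|h_m-h|\,d\mu\le2\varepsilon$, and letting $\varepsilon\downarrow0$ proves $\int_X|h_m-h|\,d\mu\to0$, which in particular yields $(\ddag)$.

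For the ``in particular'' claim I would simply invoke Hölder's inequality with exponents $s$ and $s'=s/(s-1)$: writing $M=\sup_m\int_X|h_m|^s\,d\mu$, for any measurable $E$ one has $\int_E|h_m|\,d\mu\le M^{1/s}\mu(E)^{1/s'}$, and choosing $\delta=(\varepsilon/M^{1/s})^{s'}$ makes the right-hand side smaller than $\varepsilon$ whenever $\mu(E)<\delta$, uniformly in $m$; this is exactly $(\dag)$, so $(\ddag)$ follows from the first part. I expect the main obstacle to be the tail estimate on $X\setminus F$: one must control $h_m$ \emph{and} the limit $h$ simultaneously there, which is why establishing the uniform-integrability bound for $h$ in advance is essential. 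The two places where finiteness of $\mu$ is indispensable—namely the partition argument bounding $\sup_m\|h_m\|_{1}$ and the application of Egorov's theorem—should also be flagged explicitly, since the statement can fail without it.
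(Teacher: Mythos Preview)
The paper does not supply its own proof of this lemma; it simply records the statement as the classical Vitali convergence theorem and points to Theorem~26.C in Halmos's \emph{Measure Theory}. Your argument is correct and follows the standard textbook route: Egorov's theorem furnishes the set of near-uniform convergence, the uniform-integrability hypothesis controls the small remainder (for both $h_m$ and, via Fatou, the limit $h$), and H\"older's inequality delivers the ``in particular'' clause. One minor caveat: the preliminary step in which you partition $X$ into finitely many pieces each of measure below $\delta$ tacitly assumes the measure has no atom of mass $\ge\delta$; this is harmless here since the paper only applies the lemma to (weighted) Lebesgue measure on the unit ball, and in any event the Egorov-plus-splitting argument goes through without ever invoking that global $L^1$ bound.
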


\section{The proof of the main theorem}
We  start now with the  proof of our main theorem. We divide it into   two        parts.
\subsection*{Part I}                   The aim of the first   part is to establish that
\begin{equation*}
\|T_\sigma \|_{ L^\infty(B)\rightarrow \mathcal{B}} \le \mathcal{C}\,
\frac{\Gamma(\lambda+n) \Gamma(n)}{\Gamma^2((\lambda+n)/2)}.
\end{equation*}

We have to precisely estimate from above the expression on the right side of the
expression
\begin{equation*}\begin{split}
\|T_\sigma\|_{ L^\infty(B) \rightarrow \mathcal{B}}\ \ \
 = \sup _{z\in B,\, \|G\|_\infty\le 1} \left(1-|z|^2\right)^n
\left|\mathcal{D}_z(T_\sigma G)(z)\right|_{\mathbf{C}^{\tilde{d}}}.
\end{split}\end{equation*}

Let $z\in B$ be fixed for a moment. For every $Z^*\in\mathbf{C}^{\tilde{d}}$  we    have
\begin{equation*}
\left<\mathcal{D}_z(T_\sigma G)(z),Z^*\right> =
  c_\sigma \int_{B} \left<\mathcal {D}_z {K}_\sigma(z,w),Z^*\right> G(w)\, dv(w).
\end{equation*}
Therefore,
\[\begin{split}
\left|\left<\mathcal{D}_z(T_\sigma  G)(z),Z^*\right>\right|&
\le c_\sigma  \int_{B}  \left|\left< \mathcal{D}_z {K}_\sigma (z,w), Z^*\right>\right|  \left|G(w)\right| dv(w)
\\&\le c_\sigma \int_{B} \left|\left<\mathcal{D}_z {K}_\sigma (z,w), Z^*\right>\right| dv(w)
\end{split}\]
for  every $G\in L^\infty (B)$ which satisfies $\|G\|_\infty\le 1$.  Since   for $\alpha
\in\mathbf{Z}_+^d,\, |\alpha|=n$ we have
\begin{equation*}
D^\alpha _z{K}_\sigma (z,w)
= \frac {\Gamma(\lambda+n)}{\Gamma(\lambda)}\frac{\left(1-|w|^2\right)^\sigma}
{\left(1-\left<z,w\right>\right)^{\lambda+n}}\, \overline{w}^\alpha,
\end{equation*}
we obtain (using Lemma~\ref{LE.INT.TRANSFORM} in the last equality)
\[\begin{split}
\left(1-|z|^2\right)^n \left|\left<\mathcal{D}_z (T_\sigma  G)(z),Z^*\right>\right|
 &\le \frac {\Gamma(\lambda+n)}{\Gamma(\lambda)}\left(1-|z|^2\right)^n
\int_{B}\, \frac{\left| \left<{Z(w)},\overline{Z^*}\right>\right|}{\left|1-\left<z,w\right>\right|^{\lambda+n}}\, dv_\sigma(w)
\\&=\frac {\Gamma(\lambda+n)}{\Gamma(\lambda)}  \int_{B}\,
\frac{\left|\left<{Z(\varphi_z(\omega))}, \overline{Z^*}\right>\right|}{\left|1-\left<z,\omega\right>\right|^{\lambda-n}}\, dv_\sigma(\omega).
\end{split}\]
Since the space $\mathbf {C}^d$ with the norm   $|\cdot|_{ \mathbf{C}^{\tilde{d}}}$   is
(as a finite dimensional space)  reflexive, it follows that
\[\begin{split}
\left(1-|z|^2\right)^n \left|\mathcal{D}_z (T_\sigma G)(z)\right|_{\mathbf{C}^{\tilde{d}}}\
& = \sup_{|Z^*|_{\mathbf{C}^{\tilde{d}}}=1}\left(1-|z|^2\right)^n \left|\left< \mathcal {D}_z (T_\sigma G)(z),Z^*\right>\right|
\\&\le\sup_{|Z^*|_{\mathbf{C}^{\tilde{d}}}=1}\frac {\Gamma(\lambda+n)}{\Gamma(\lambda)}
\int_{B}\frac{\left|\left<{Z(\varphi_z(\omega))},\overline {Z^*}\right>\right|}{\left|1-\left<z,\omega\right>\right|^{\lambda-n}}
\, dv_\sigma(\omega)
\end{split}\]
(in                 the last two expression  we  mean  the   conjugate norm of   $Z^*\in
\mathbf{C}^{\tilde{d}}$).   Now, since for $|Z^*|_{  {\mathbf{C}}^{\tilde{d}}}= 1$ (then
also $|\overline{Z^*}|_{{\mathbf{C}}^{\tilde{d} }}  = 1$,   by our assumption concerning
the norm on $\mathbf{C}^{\tilde{d}}$)   we have
\begin{equation*}\begin{split}
\left|\left<{Z(\varphi_z(\omega))},\overline {Z^*}\right> \right|&\le
|{Z(\varphi_z(\omega))}|_{  {\mathbf{C}}  ^{\tilde{d} }  }\, |\overline{Z^*}|_{  {\mathbf{C}}  ^{\tilde{d} }  }
\le \mathcal{C}
\end{split}\end{equation*}
(recall that  $\mathcal{C}$ stands for the maximum of $|Z(\zeta)|_{  {\mathbf{C}}  ^{\tilde{d} }  }$
on the  unit sphere; see Remark~\ref{RE.C}), we infer
\[\begin{split}
\left(1-|z|^2\right)^n \left| \mathcal{D}_z (T_\sigma G)(z)\right|_{ \mathbf{C}^{\tilde{d}}}
&\le\frac {\Gamma(\lambda+n)}{\Gamma(\lambda)}\, c_\sigma\, \mathcal{C}\,
\int_{B}  \frac {\left(1-|\omega|^2\right)^\sigma}{\left|1-\left<z,\omega\right>\right|^{\lambda-n}}\, dv(\omega)
\\& = \mathcal{C} \, \frac {\Gamma(\lambda+n)}{\Gamma(\lambda)}\, c_\sigma  \, J_{-n,\sigma}(z).
\end{split}\]

By Lemma~\ref{LE.RUDIN} we have
\[\begin{split}
\frac {\Gamma(\lambda+n)}{\Gamma(\lambda)} \,  c_\sigma \, J_{-n,\sigma}(z)
&\le \frac {\Gamma(\lambda+n)}{\Gamma(\lambda)} \, c_\sigma \, J_{-n,\sigma}(e_1)
\\&= \frac {\Gamma(\lambda+n)}{\Gamma(\lambda)} \frac {\Gamma(\lambda)}{\Gamma( \sigma +1)\Gamma(d+1)}
\frac {\Gamma(d+1)\Gamma(\sigma+1) \Gamma(n)}{\Gamma^2( (d+ 1+ \sigma+ n)/2)}
\\&=\frac {\Gamma(\lambda+n) \Gamma(n)}{\Gamma^2(( \lambda  +n)/2)}
\end{split}\]
for all $z\in\ B$.

Finally, we obtain
\begin{equation*}\begin{split}
\|T_\sigma\|_{L^\infty(B)\rightarrow \mathcal{B}}  \ \ \
& = \sup _{z\in B,\, \|G\|_\infty\le 1}
\left(1-|z|^2\right)^n   \left|\mathcal{D}_z(T_\sigma G)(z)\right|_{\mathbf{C}^{\tilde{d}}}
\\&\le \mathcal{C}\, \frac {\Gamma(\lambda+n) \Gamma(n)}{\Gamma^2((\lambda+ n)/2)},
\end{split}\end{equation*}
which is what we wanted to prove.

\subsection*{Part II}
As the second part of the proof we are going to prove             the reverse inequality
\begin{equation*}
\|T_\sigma \|_{ L^\infty(B)\rightarrow \mathcal{B}}\ge
\mathcal{C} \, \frac {\Gamma(\lambda+n) \Gamma(n)}{\Gamma^2((\lambda +  n)/2)}.
\end{equation*}
It is enough to find a  sequence of points  $\{z_m\in B:m\in\mathbf{Z}_+\}$,       and a
sequence of  functions                                        $\{G_m(w)\in L^\infty (B):
 \|G_m \|_\infty \le 1,\, m\in\mathbf{Z}_+\}$ such  that
\begin{equation*}
\liminf_{m\rightarrow\infty}
\left(1-|z_m|^2\right)^n
\left|\mathcal{D}_z ( T_\sigma G_m)(z_m)\right|_{{\mathbf{C}}^{\tilde{d}}}\ge
\mathcal{C}\, \frac {\Gamma(\lambda+n)\Gamma(n)}{\Gamma^2((\lambda + n)/2)}.
\end{equation*}

Recall that we  denoted by $\mathcal {C}$ the value $\sup _{|\zeta|=1}|Z(\zeta)|_{{\mathbf{C}}^{\tilde{d}}}$.
Assume that the maximum is attained for $\zeta_0\in\mathbf{C}^n$.           Thus we have
$|\zeta_0|=1$ and
\begin{equation*}
|Z(\zeta_0)|_{{\mathbf{C}}^{\tilde{d}}} = \mathcal {C}.
\end{equation*}
Since
\begin{equation*}
|Z (\zeta_0)|_{{\mathbf{C}}^{\tilde{d}}} \
 = \sup_{|Z^*|_{{\mathbf{C}}^{\tilde{d}}}=1} \left|\left<Z(\zeta_0),Z^*\right>\right|,
\end{equation*}
we have
\begin{equation*}
|Z(\zeta_0)|_{{\mathbf{C}}^{\tilde{d}}}
= \left|\left<Z(\zeta_0),Z^*_0\right>\right|=\mathcal{C}
\end{equation*}
for  some concrete $|Z^*_0|_{{\mathbf{C}}^{\tilde{d}}}=1$ (we mean the conjugate norm of
$Z^*_0$). Take $\{z_m = \varepsilon_{m}\cdot \zeta_0:m\in\mathbf{Z}_+\}$,          where
$\{\varepsilon_m:m\in\mathbf{Z}_+\}\subseteq (0,1)$    is a sequence of increasing numbers
such that $\lim_{m\rightarrow\infty} \varepsilon_m=1$.  Denote
\begin{equation*}
G_m(w)= \frac{ {\left<{Z(w)},{Z^*_0}\right>}}
{\left|\left<{Z(w)},{Z^*_0}\right>\right|}\frac{\left|1-\left<z_m,w\right>\right|^{\lambda+n}}
{(1-\overline{\left<z_m,w\right>})^{\lambda+n}}\quad (w\in B).
\end{equation*}
Then  $G_m\in  L^\infty (B)$  and $\|G_m\|_\infty=1$.

Since
\begin{equation*}
D^\alpha _z{K}_\sigma (z,w) = \frac {\Gamma(\lambda+n)}{\Gamma(\lambda)}
\frac{ \left(1-|w|^2\right)^\sigma} {\left(1-\left<z,w\right>\right)^{\lambda+n}} \, \overline{w}^\alpha ,
\end{equation*}
it follows that
\begin{equation*}\begin{split}
\left<\mathcal{D}_z{K}_\sigma (z,w),\overline{Z^*_0}\right> &
= \frac {\Gamma(\lambda+n)}{\Gamma(\lambda)}
\frac {\left(1-|w|^2\right)^\sigma}{\left(1-\left<z,w\right>\right)^{\lambda+n}} \, \overline{\left<{Z(w)},{Z^*_0}\right>}.
\end{split}\end{equation*}
Therefore,
\begin{equation*}\begin{split}
&\left| \left<\mathcal {D}_z (T_\sigma G_m)(z_m),\overline{Z^*_0}\right>\right|
\\&= c_\sigma\left|\int_B\, \left<\mathcal{D}_z {K}_\sigma (z_m,w),\overline{Z^*_0}\right>  G_m(w)\, dv(w)\right|
\\& =\frac {\Gamma(\lambda+n)}{\Gamma(\lambda)} \left|\int_B\, \frac{\overline {\left<{Z(w)},{Z^*_0}\right>} } {\left(1-\left<z_m,w\right>\right)^{\lambda+n}}\,  \frac{ {\left<{Z(w)},{Z^*_0}\right>}}
{\left|\left<{Z(w)},{Z^*_0}\right>\right|}\frac{\left|1-\left<z_m,w\right>\right|^{\lambda+n}}
{(1-\overline{\left<z_m,w\right>})^{\lambda+n}}\, dv_\sigma(w)\right|
\\& = \frac {\Gamma(\lambda+n)}{\Gamma(\lambda)}  \int_{B} \frac{ \left| \left<Z(w),{Z^*_0}\right>\right|}
{\left|1-\left<z_m,w\right>\right|^{\lambda+n}}\, dv_\sigma(w).
\end{split}\end{equation*}
Hence
\begin{equation*}\begin{split}
\left(1-|z_m|^2\right)^n  & \left| \left<\mathcal {D}_z(T_\sigma G_m)(z_m),\overline{Z^*_0}\right>\right|
\\&=  \frac {\Gamma(\lambda+n)}{\Gamma(\lambda)} \left(1-|z_m|^2\right)^n
\int_{B} \frac{ \left| \left<Z(w),Z^*_0\right>  \right|}{\left|1-\left<z_m,w\right>\right|^{\lambda+n}}\, dv_\sigma(w)
\\& =  \frac {\Gamma(\lambda+n)}{\Gamma(\lambda)} \int_{B} \frac {\left|\left<Z(\varphi_{z_m}(\omega)),Z^*_0\right>\right|}
{\left|1-\left<z_m,\omega\right>\right|^{\lambda-n}}\, dv_\sigma (\omega),
\end{split}\end{equation*}
where we have used Lemma~\ref{LE.INT.TRANSFORM} in the last equality.

Since
\begin{equation*}
\left|\mathcal {D}_z(T_\sigma G_m)(z_m)\right|_{ \mathbf{C}^{\tilde{d}} }\ =
\sup_{|W^*|_{ \mathbf{C}^{\tilde{d}} }=1} \left| \left<\mathcal{D}(T_\sigma G_m)(z_m),W^*\right>\right|,
\end{equation*}
(we mean the conjugate norm of $W^*\in\mathbf{C}^{\tilde{d}}$), we have
\begin{equation*}\begin{split}
\liminf_{m\rightarrow\infty}\left(1-|z_m|^2\right)^n  & \left|\mathcal{D}_z(T_\sigma G_m)(z_m)\right|_{ \mathbf{C}^{\tilde{d}} }
\\&\ge\liminf_{m\rightarrow\infty}\left(1-|z_m|^2\right)^n  \left| \left<\mathcal{D}_z (T_\sigma G_m)(z_m),\overline{Z^*_0}\right>\right|
\\& = \frac {\Gamma(\lambda+n)}{\Gamma(\lambda)}\liminf_{m\rightarrow\infty}
\int_{B} \frac {\left|\left<Z(\varphi_{z_m}(\omega)),Z^*_0\right>\right|}
{\left|1-\left<z_m,\omega\right>\right|^{\lambda-n}}\, dv_\sigma (\omega).
\end{split}\end{equation*}

We will calculate
\begin{equation*}
\lim_{m\rightarrow\infty} \int_{B}  \frac {\left|\left<Z(\varphi_{z_m}(\omega)),Z^*_0\right>\right|}
{\left|1-\left<z_m,\omega\right>\right|^{\lambda-n}}\, dv_\sigma (\omega).
\end{equation*}
We use the Vitali theorem. For fixed $\omega\in B$ we have
\[\begin{split}
\lim_{m\rightarrow \infty} \frac {\left|\left<Z(\varphi_{z_m}(\omega)),Z^*_0\right>\right|}
{\left|1-\left<z_m,\omega\right>\right|^{\lambda-n}} &
=\frac {\left|\left<Z(\zeta_0),Z^*_0\right>\right|}
{\left|1-\left<\zeta_0,\omega\right>\right|^{\lambda-n}}
=\frac{\mathcal{C}}{{\left|1-\left<\zeta_0,\omega\right>\right|^{\lambda-n}}}.
\end{split}\]
Since for   $s  =  \frac{\lambda- 1/2}{\lambda-n }>1$ (the parameter in Lemma~\ref{LE.VITALI}),
according to Lemma~\ref{LE.RUDIN} (take $c=- 1/2$ and $t=\sigma$), we have
\begin{equation*}
\sup_{m\in\mathbf{Z}_+}\int_{B} \left\{ \frac {\left|\left<Z(\varphi_{z_m}(\omega)),Z^*_0\right>\right|}
{\left|1-\left<z_m,\omega\right>\right|^{\lambda-n}}\right\}^s  dv_\sigma(\omega)
\le \mathcal{C}^s
\sup_{m\in\mathbf{Z}_+} \int_{B} \frac { dv_\sigma(\omega)}{\left|1-\left<\zeta_0,\omega\right>\right|^{\lambda-1/2}}
<\infty
\end{equation*}
(note that
\begin{equation*}
\left|\left<Z(\varphi_{z_m}(\omega)),Z^*_0\right>\right|\le
\left|Z(\varphi_{z_m}(\omega))\right|_{\mathbf{C}^{\tilde{d}}} |Z^*_0|_{\mathbf{C}^{\tilde{d}}}\le \mathcal{C}),
\end{equation*}
by the Vitali theorem we conclude that
\[\begin{split}
\lim_{m\rightarrow \infty}\int_B  \frac {\left|\left<Z(\varphi_{z_m}(\omega)),Z^*_0\right>\right|}
{\left|1-\left<z_m,\omega\right>\right|^{\lambda-n}}\, dv_\sigma(\omega)
&= \int_{B} \lim_{m\rightarrow \infty} \frac {\left|\left<Z(\varphi_{z_m}(\omega)),Z^*_0\right>\right|}
{\left|1-\left<z_m,\omega\right>\right|^{\lambda-n}}\, dv_\sigma(\omega)
\\&=\mathcal{C}\, \int_B \frac{ dv_\sigma (\omega)}{{\left|1-\left<\zeta_0,\omega\right>\right|^{\lambda-n}}}
\\&= \mathcal{C}\, c_\sigma \, J_{-n,\sigma }(\zeta_0).
\end{split}\]

Finally, again by Lemma~\ref{LE.RUDIN} we have
\[\begin{split}
\liminf_{m\rightarrow \infty} \left(1-|z_m|^2\right)^n \left|\mathcal{D}_z(T_\sigma  G_m)(z_m)\right|_{\mathbf{C}^{\tilde{d}}}
&\ge \frac {\Gamma(\lambda+n)}{\Gamma(\lambda)} \, \mathcal{C} \, c_\sigma  \,  J_{-n,\sigma }(e_1)
\\&=\mathcal{C} \, \frac{\Gamma(\lambda + n) \Gamma(n)}{\Gamma^2( (\lambda  +  n)/2)}.
\end{split}\]
This  finishes the second part of the  proof.

\section{Some corollaries}
We  consider here some corollaries of our main result  and  we mention some of earlier
results              concerning the    semi--norms  of the Bergman projection operator.
\subsection{The one--dimensional  case}
The       following corollary       is just a reformulation of our main theorem in the
case $d=1$.

\begin{corollary}    Let $|\cdot| _ {\mathbf{C}}$ be a norm on $\mathbf{C}$.     Let a
semi--norm on the  Bloch space   $\mathcal{B}$   in  the unit disc $\mathbf{U}$ in the
complex plane be given by
\begin{equation*}
\|f\|_\mathcal{B}
= \sup_{z\in\mathbf{U}} \left(1-|z|^2\right)^n |f^{(n)}(z)|_{\mathbf{C}}.
\end{equation*}
Then,  the   Bergman     projection operator $T_\sigma:L^\infty(\mathbf{U})\rightarrow
\mathcal{B}$ satisfies
\begin{equation*}
\| T_\sigma \| _{ L^\infty(\mathbf{U}) \rightarrow\mathcal{B}} =
\mathcal {C} \,     \frac{\Gamma(2+\sigma+n) \Gamma(n)}{\Gamma^2(1 +  (\sigma +  n)/2)}
\end{equation*}
for  every  $\sigma>-1$, where $\mathcal {C} = \max_{|\zeta|=1}  |\zeta|_{\mathbf{C}}$.
\end{corollary}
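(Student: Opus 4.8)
The plan is to deduce the corollary directly from the main theorem by setting $d = 1$, so that the proof amounts to translating every ingredient of the theorem into one--variable notation. First I would record the combinatorial reductions. With $d = 1$ we get $\tilde{d} = \binom{n + d - 1}{d - 1} = \binom{n}{0} = 1$, so $\mathbf{C}^{\tilde{d}} = \mathbf{C}$ and the norm $|\cdot|_{\mathbf{C}^{\tilde{d}}}$ of the theorem is just a norm $|\cdot|_{\mathbf{C}}$ on $\mathbf{C}$ (satisfying $|\overline{Z}|_{\mathbf{C}} = |Z|_{\mathbf{C}}$, as required for the theorem to apply). The only multi--index $\alpha \in \mathbf{Z}_+^1$ with $|\alpha| = n$ is $\alpha = n$, hence the vector $\mathcal{D}_z f(z)$ has the single component $\partial_z^n f(z) = f^{(n)}(z)$; under the identification $\mathbf{C}^{1} = \mathbf{C}$ the Bloch semi--norm from the theorem becomes exactly $\sup_{z \in \mathbf{U}}(1 - |z|^2)^n |f^{(n)}(z)|_{\mathbf{C}}$, the semi--norm used in the corollary.

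Next I would match the constants and the Gamma factors. With $d = 1$ one has $\lambda = d + 1 + \sigma = 2 + \sigma$, so that $\lambda + n = 2 + \sigma + n$ and $(\lambda + n)/2 = 1 + (\sigma + n)/2$; substituting these into $\mathcal{C}\,\Gamma(\lambda + n)\Gamma(n)/\Gamma^2((\lambda + n)/2)$ yields $\mathcal{C}\,\Gamma(2 + \sigma + n)\Gamma(n)/\Gamma^2(1 + (\sigma + n)/2)$, which is the value claimed. It then remains only to identify $\mathcal{C}$. In the theorem $\mathcal{C} = \max_{|\zeta| = 1}|Z(\zeta)|_{\mathbf{C}^{\tilde{d}}}$ with $Z(\zeta) = (\dots, \zeta^\alpha, \dots)$; for $d = 1$ this vector has the single entry $\zeta^n$, so $\mathcal{C} = \max_{|\zeta| = 1}|\zeta^n|_{\mathbf{C}}$. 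Since the map $\zeta \mapsto \zeta^n$ carries the unit circle $\{|\zeta| = 1\}$ onto itself, this maximum equals $\max_{|\zeta| = 1}|\zeta|_{\mathbf{C}}$, which is precisely the constant appearing in the statement of the corollary.

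Finally I would invoke the main theorem verbatim for the ball $B = \mathbf{U}$: for every $\sigma > -1$ it gives $\|T_\sigma\|_{L^\infty(\mathbf{U}) \rightarrow \mathcal{B}} = \mathcal{C}\,\Gamma(\lambda + n)\Gamma(n)/\Gamma^2((\lambda + n)/2)$, and by the substitutions above the right--hand side is the displayed expression of the corollary. There is no real obstacle in this argument: it is pure bookkeeping, and the one non--formal point is the elementary observation that $\zeta \mapsto \zeta^n$ is surjective on the unit circle, which is what allows one to replace $\max_{|\zeta| = 1}|\zeta^n|_{\mathbf{C}}$ by $\max_{|\zeta| = 1}|\zeta|_{\mathbf{C}}$.
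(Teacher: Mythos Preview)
Your proposal is correct and matches the paper's approach exactly: the paper itself gives no separate proof, merely remarking that the corollary ``is just a reformulation of our main theorem in the case $d=1$,'' and your argument supplies precisely the bookkeeping that this reformulation entails.
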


For $\sigma=0$ and $d=1$ we put $P=T_0$ (then we have the ordinary Bergman projection).
Per\"{a}l\"{a} proved that
\begin{equation*}
\|P\|_{ L^\infty(\mathbf{U})\rightarrow\mathcal{B}} = \frac 8\pi,
\end{equation*}
which is the main results  of~\cite{PERALA.AASF.2012}.

If  the norm in the above corollary is given in the standard way, then $\mathcal{C}=1$,
and therefore,                            for the ordinary Bergman projection  we have
\begin{equation*}
\|P\| _{ L^\infty(\mathbf{U})\rightarrow\mathcal{B}}
 = \frac{ 4 (n+1) \Gamma^2(n)}{n \Gamma^2( n/2)}.
\end{equation*}

\subsection{The   high--dimensional case}
For $Z= (Z_1,\dots, Z_{\tilde{d}})\in\mathbf{C}^{\tilde{d}}$   denote
\begin{equation*}
|Z|_p  = \left\{ |Z_1|^p + \dots + |Z_{\tilde{d}}|^p\right\}^{1/p}\quad (1\le p<\infty)
\end{equation*}
and
\begin{equation*}
|Z|_\infty   = \max_{1 \le j\le \tilde{d} } |Z_j|.
\end{equation*}

In~\cite{KALAJ.SCAND} we find the semi--norm of $T_\sigma$ w.r.t. the semi--norm
on  $\mathcal{B}$ given by
\begin{equation*}
\|f\|_\mathcal{B} =  \sup_{z\in  B}\,  \left(1-|z|^2\right)  \left|\nabla f(z)\right|.
\end{equation*}
In this case 
\begin{equation*}
\|T_\sigma\|_{ L^\infty(B)\rightarrow\mathcal{B} }=  \frac{\Gamma(\lambda+1)}{\Gamma^2((\lambda + 1)/2)}.
\end{equation*}

If a semi--norm on $\mathcal {B}$ is given by
\begin{equation*}\begin{split}
\|f\|_{\mathcal{B}}& = \sup_{z\in B}  \left(1-|z|^2\right)^n |\mathcal{D}_z f(z)|_\infty
= \sup_{z\in B}\, \left(1-|z|^2\right)^n\, \max_{|\alpha|=n}  |\partial^{\alpha}_z f(z)|,
\end{split}\end{equation*}
then    it is not hard to see that $\mathcal{C}=1$. Therefore, by our theorem, we
have
\begin{equation*}
\|T_\sigma\| _{ L^\infty(B)\rightarrow\mathcal{B} }=  \frac{\Gamma(\lambda+n) \Gamma(n)}{\Gamma^2((\lambda+  n)/2)}.
\end{equation*}
This result was obtained in~\cite{KALAJ.JOT}.

We extend the preceding  results  in the following

\begin{corollary}
For $1\le p<\infty$ let a semi--norm on $\mathcal {B}$  be given by
\begin{equation*}\begin{split}
\|f\|_{\mathcal{B}}& = \sup_{z\in B}  \left(1-|z|^2\right)^n |\mathcal{D}_z f(z)|_p
=\sup_{z\in B} \left(1-|z|^2\right)^n \left\{ \sum_{|\alpha| = n }|\partial^\alpha_z f(z)|^p\right\}^{1/p}.
\end{split}\end{equation*}
Then
\begin{equation*}
\|T_\sigma\|_{ L^\infty(B)\rightarrow\mathcal{B} }=
 C_p \, \frac{\Gamma(\lambda+n) \Gamma(n)}{\Gamma^2( (\lambda + n)/2)},
\end{equation*}
where\begin{equation*}
{\tilde{d}} ^{ 1/ p} d^{- n / 2}\le C_p \le {\tilde{d}}^{ 1/p -  1/2},\quad 1\le p<2,
\end{equation*}
and
\begin{equation*}
{C}_p =1,\quad 2\le p<\infty.
\end{equation*}
Particularly, for $n=1$ (then $\tilde{d} =d$) we have
\begin{equation*}
C_p=\left\{
\begin{array}{ll}
d^{ 1/p- 1/2}, & \hbox{if\, $1\le p<2$,}\\
1, & \hbox{if\, $2\le p<\infty$}.
\end{array}
\right.
\end{equation*}
\end{corollary}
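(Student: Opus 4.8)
\smallskip
\noindent\textbf{Proof sketch.}
The plan is to apply the main theorem with the concrete norm $|\cdot|_{\mathbf{C}^{\tilde d}}=|\cdot|_p$ and then to estimate the resulting constant by an elementary extremal argument. First I would note that $|\cdot|_p$ satisfies the hypothesis $|\overline Z|_p=|Z|_p$, since this already holds coordinatewise; hence the theorem applies and gives
\[
\|T_\sigma\|_{L^\infty(B)\to\mathcal B}=C_p\,\frac{\Gamma(\lambda+n)\Gamma(n)}{\Gamma^2((\lambda+n)/2)},\qquad
C_p=\max_{|\zeta|=1}|Z(\zeta)|_p .
\]
Writing $|Z(\zeta)|_p^p=\sum_{|\alpha|=n}|\zeta^\alpha|^p=\sum_{|\alpha|=n}\prod_{j}\bigl(|\zeta_j|^2\bigr)^{p\alpha_j/2}$ and putting $t_j=|\zeta_j|^2$, the whole statement reduces to maximising $\sum_{|\alpha|=n}\prod_j t_j^{p\alpha_j/2}$ over the simplex $\{t_j\ge 0,\ \sum_j t_j=1\}$.

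For $p=2$ the key computation is $|Z(\zeta)|_2^2=\sum_{|\alpha|=n}\prod_j|\zeta_j|^{2\alpha_j}=\sum_{|\alpha|=n}t^\alpha$, which by the multinomial theorem satisfies $\sum_{|\alpha|=n}t^\alpha\le\sum_{|\alpha|=n}\binom{n}{\alpha}t^\alpha=\bigl(\sum_j t_j\bigr)^n=1$, with equality at a vertex, e.g.\ $\zeta=e_1$. Thus $\max_{|\zeta|=1}|Z(\zeta)|_2=1$. Since $|Z|_p\le|Z|_2$ on $\mathbf{C}^{\tilde d}$ for $p\ge 2$ while $|Z(e_1)|_p=1$, this yields $C_p=1$ for $2\le p<\infty$.

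For $1\le p<2$ I would obtain the upper bound from the norm comparison $|Z|_p\le\tilde d^{\,1/p-1/2}|Z|_2$ on $\mathbf{C}^{\tilde d}$ (immediate from H\"older's inequality with exponents $2/p$ and $2/(2-p)$) together with $\max_{|\zeta|=1}|Z(\zeta)|_2=1$, giving $C_p\le\tilde d^{\,1/p-1/2}$. For the lower bound I would test the single point $\zeta=d^{-1/2}(1,\dots,1)$, which has $|\zeta|=1$ and $|\zeta^\alpha|=d^{-n/2}$ for each of the $\tilde d$ multi-indices with $|\alpha|=n$, so that $|Z(\zeta)|_p=\bigl(\tilde d\,d^{-np/2}\bigr)^{1/p}=\tilde d^{\,1/p}d^{-n/2}$, whence $C_p\ge\tilde d^{\,1/p}d^{-n/2}$. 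Finally, for $n=1$ one has $\tilde d=\binom{d}{d-1}=d$, so the two bounds coincide and $C_p=d^{1/p-1/2}$ in that range.

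There is no serious obstacle here: once the constant $\mathcal C$ of the main theorem is identified with $\max_{|\zeta|=1}|Z(\zeta)|_p$, everything is elementary. The only point needing a moment's care is the sharp evaluation $\max_{|\zeta|=1}|Z(\zeta)|_2=1$, which rests on the observation that deleting the multinomial coefficients $\binom{n}{\alpha}\ge 1$ from $(\sum_j t_j)^n$ can only decrease the sum, with equality precisely at the vertices of the simplex; this is exactly what forces $C_p=1$ for $p\ge 2$ and what supplies the exponent $1/p-1/2$ in the upper bound for $p<2$.
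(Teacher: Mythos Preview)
Your proposal is correct and follows essentially the same route as the paper: apply the main theorem with $|\cdot|_{\mathbf{C}^{\tilde d}}=|\cdot|_p$, then bound $C_p=\max_{|\zeta|=1}|Z(\zeta)|_p$ via the $\ell^p$--$\ell^2$ comparison together with the key estimate $|Z(\zeta)|_2\le 1$ (which the paper writes as $\sum_{|\alpha|=n}t^\alpha\le\prod_{j=1}^n(\sum_k t_k)=1$, i.e.\ exactly your multinomial argument), and test at $\zeta=e_1$ and $\zeta=d^{-1/2}(1,\dots,1)$ for the matching lower bounds. The only cosmetic difference is that the paper packages the computation of $C_p$ as a separate lemma.
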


This corollary follows from our  main result and the next simple

\begin{lemma}
We have
\begin{equation*}
 {\tilde{d}} ^{ 1/p} d^{-  n/2}\le \max_{|\zeta | =1 } |Z(\zeta)|_p \le{\tilde{d}}^{  1/p-   1/2} ,\quad 1\le p<2,
\end{equation*}
and
\begin{equation*}
\max_{|\zeta | =1 } |Z(\zeta)|_p = 1,\quad 2\le p<\infty.
\end{equation*}
\end{lemma}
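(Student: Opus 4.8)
The plan is to reduce everything to one elementary identity. By the multinomial theorem, for every $\zeta\in\mathbf{C}^d$,
\begin{equation*}
\sum_{|\alpha|=n}\frac{n!}{\alpha_1!\cdots\alpha_d!}\,|\zeta^\alpha|^2
=\sum_{|\alpha|=n}\frac{n!}{\alpha_1!\cdots\alpha_d!}\,|\zeta_1|^{2\alpha_1}\cdots|\zeta_d|^{2\alpha_d}
=\bigl(|\zeta_1|^2+\dots+|\zeta_d|^2\bigr)^n=|\zeta|^{2n}.
\end{equation*}
Since each multinomial coefficient is $\ge 1$, this gives $\sum_{|\alpha|=n}|\zeta^\alpha|^2\le|\zeta|^{2n}$; hence $|Z(\zeta)|_2\le 1$ whenever $|\zeta|=1$, and testing with $\zeta=e_1$ (for which $Z(e_1)$ has a single nonzero entry $\zeta^{ne_1}=1$) shows $\max_{|\zeta|=1}|Z(\zeta)|_2=1$.

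For $2\le p<\infty$ I would invoke the monotonicity of $\ell^p$-norms on $\mathbf{C}^{\tilde d}$: since $p\ge 2$ we have $|Z(\zeta)|_p\le|Z(\zeta)|_2\le 1$ for all $|\zeta|=1$, while $\zeta=e_1$ again attains the value $1$; this proves $\max_{|\zeta|=1}|Z(\zeta)|_p=1$. For $1\le p<2$ the upper bound follows from H\"older's inequality in the form $|v|_p\le\tilde d^{\,1/p-1/2}\,|v|_2$, valid for every $v\in\mathbf{C}^{\tilde d}$; applying it to $v=Z(\zeta)$ and using $|Z(\zeta)|_2\le 1$ gives $|Z(\zeta)|_p\le\tilde d^{\,1/p-1/2}$ for all $|\zeta|=1$.

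For the lower bound in the range $1\le p<2$ I would simply test with $\zeta_\ast=d^{-1/2}(1,1,\dots,1)$, which satisfies $|\zeta_\ast|=1$ and $\zeta_\ast^\alpha=d^{-n/2}$ for every $\alpha$ with $|\alpha|=n$; since $Z(\zeta_\ast)$ has exactly $\tilde d$ components, all equal to $d^{-n/2}$,
\begin{equation*}
|Z(\zeta_\ast)|_p=\bigl(\tilde d\cdot d^{-np/2}\bigr)^{1/p}=\tilde d^{\,1/p}\,d^{-n/2},
\end{equation*}
so $\max_{|\zeta|=1}|Z(\zeta)|_p\ge\tilde d^{\,1/p}\,d^{-n/2}$. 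This yields all the claimed bounds; I would also note that for $n=1$ one has $\tilde d=\binom{d}{d-1}=d$, so the lower and upper bounds both equal $d^{1/p-1/2}$ and the inequalities collapse to the exact value stated in the subsequent corollary. The argument is routine; the only points requiring care are the direction of the $\ell^p$-norm monotonicity and the exponent $1/p-1/2$ in the H\"older constant, so I do not anticipate a genuine obstacle.
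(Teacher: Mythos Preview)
Your proof is correct and follows essentially the same route as the paper: both establish $|Z(\zeta)|_2\le 1$ on the unit sphere (you via the multinomial coefficients being $\ge 1$, the paper by bounding the sum of monomials by the product $(|\zeta_1|^2+\dots+|\zeta_d|^2)^n$, which is the same observation), then use the standard $\ell^p$--$\ell^2$ comparison and test with $\zeta=e_1$ and $\zeta=d^{-1/2}(1,\dots,1)$ exactly as you do.
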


\begin{proof}
It is known that for every $W\in \mathbf{C}^{\tilde{d}}$   the sharp inequalities
\begin{equation*}
|W| _p \le \tilde{d}^{ 1/p- 1/2} |W|,\quad 1\le p<2,
\end{equation*}
hold  and
\begin{equation*}
|W| _p \le |W|,\quad 2\le p<\infty.
\end{equation*}

If $|\zeta| =1$, then  $|Z(\zeta)| \le 1$;
\begin{equation*}\begin{split}
|Z(\zeta)|^2  &= \sum _{|\alpha| =n }  |\zeta^\alpha|^2 = \sum_{|\alpha| =n } \{|\zeta_1|^2\}^{\alpha_1}\cdots \{|\zeta_d|^2\}^{\alpha_d}
\\& \le \prod_{j=1}^n (|\zeta_1|^2 + \dots + |\zeta_d|^2)=1.
\end{split}\end{equation*}
We infer that $|Z(\zeta)| _p\le  \tilde{d}^{ 1/p-  1/2} $ if $1\le p<2$, and
$|Z(\zeta)| _p \le1$ if $2\le p<\infty$. On the other hand,  if we take $\zeta =
(d^{- 1/2},\dots,d^{-  1/2})$ for $1\le p< 2$,  and $\zeta=e_1$ in the case
$2\le p<\infty$, we find  $|Z(\zeta)|_p = {\tilde{d}} ^{  1/p} d^{-  n/2}\, (1\le p<2)$,
and  $|Z(\zeta)|_p  = 1\, (2\le p<\infty)$. This proves the lemma.
\end{proof}

\end{document}